\newtheorem{theorem}{Theorem}
\newtheorem{definition}[theorem]{Definition}
\newtheorem{example}[theorem]{Example}
\newtheorem{proposition}[theorem]{Proposition}
\newtheorem{remark}[theorem]{Remark}
\newenvironment{proof}[1][Proof]{\noindent\textbf{#1.} }{\ \rule{0.5em}{0.5em}}
\begin{document}

\title{Asymptotics with respect to the spectral parameter and Neumann series
of Bessel functions for solutions of the one-dimensional Schr\"{o}dinger
equation}
\author{Vladislav V. Kravchenko and Sergii M. Torba \\
{\small Departamento de Matem\'{a}ticas, CINVESTAV del IPN, Unidad Quer\'{e}%
taro, }\\
{\small Libramiento Norponiente No. 2000, Fracc. Real de Juriquilla, Quer%
\'{e}taro, Qro. C.P. 76230 MEXICO}\\
{\small e-mail: vkravchenko@math.cinvestav.edu.mx,
storba@math.cinvestav.edu.mx \thanks{%
Research was supported by CONACYT, Mexico via the project 222478.}}}
\maketitle

\begin{abstract}
A representation for a solution $u\left( \omega ,x\right) $ of the equation $%
-u^{\prime \prime }+q(x)u=\omega ^{2}u$, satisfying the initial conditions $%
u\left( \omega ,0\right) =1$, $u^{\prime }\left( \omega ,0\right) =i\omega $
is derived in the form
\begin{equation}\label{u abstr}
u\left( \omega ,x\right) =e^{i\omega x}\left( 1+\frac{u_{1}(x)}{\omega }+%
\frac{u_{2}(x)}{\omega ^{2}}\right) +\frac{e^{-i\omega x}u_{3}(x)}{\omega
^{2}}-\frac{1}{\omega ^{2}}\sum_{n=0}^{\infty }i^{n}\alpha
_{n}(x)j_{n}\left( \omega x\right),
\end{equation}
where $u_{m}(x)$, $m=1,2,3$ are given in a closed form, $j_{n}$ stands for a
spherical Bessel function of order $n$ and the coefficients $\alpha _{n}$
are calculated by a recurrent integration procedure. The following estimate
is proved $\left\vert u\left( \omega ,x\right) -u_{N}\left( \omega ,x\right)
\right\vert \leq \frac{1}{\left\vert \omega \right\vert ^{2}}\varepsilon
_{N}(x)\sqrt{\frac{\sinh \left( 2\mathop{\rm Im}\omega \,x\right) }{\mathop{\rm Im}%
\omega }}$ for any $\omega \in \mathbb{C}\backslash \left\{ 0\right\} $,
where $u_{N}\left( \omega ,x\right) $ is an approximate solution given by truncating the series in \eqref{u abstr} and $\varepsilon _{N}(x)$ is a nonnegative function
tending to zero for all $x$ belonging to a finite interval of interest. In
particular, for $\omega \in \mathbb{R}\backslash \left\{ 0\right\} $ the
estimate has the form $\left\vert u\left( \omega ,x\right) -u_{N}\left(
\omega ,x\right) \right\vert \leq \frac{1}{\left\vert \omega \right\vert ^{2}%
}\varepsilon _{N}(x)$. A numerical illustration of application of the new
representation for computing the solution $u\left( \omega ,x\right) $ on
large sets of values of the spectral parameter $\omega $ with an accuracy
nondeteriorating (and even improving) when $\omega \rightarrow \pm \infty $ is given.
\end{abstract}

\section{Introduction}

The equation
\begin{equation}
-u^{\prime \prime }+qu=\omega ^{2}u  \label{SchrodIntro}
\end{equation}%
is considered on a finite interval $\left(0,b\right) $, $\omega \in
\mathbb{C}$ with $q\in C^{1}\left[0,b\right] $ being a complex valued
function. The asymptotics with respect to the spectral parameter $\omega $
of solutions of (\ref{SchrodIntro}) is, of course, a well studied topic
exposed in a number of classical books such as \cite{Fedoryuk}, \cite%
{Marchenko}, \cite{Naimark} and \cite{Olver}. A typical result establishes
the existence for $q\in W_{2}^{n}\left[ -b,b\right] $ of a solution in the
form%
\begin{equation*}
y\left( \omega ,x\right) =e^{i\omega x}\left( u_{0}(x)+\frac{u_{1}(x)}{%
2i\omega }+\ldots +\frac{u_{n}(x)}{\left( 2i\omega \right) ^{n}}+\frac{%
u_{n+1}(\omega ,x)}{\left( 2i\omega \right) ^{n+1}}\right)
\end{equation*}%
with some additional information on $u_{n+1}(\omega ,x)$ and formulas for
computing $u_{0},\ldots ,u_{n}$ (see, e.g., \cite[Section 1.4]{Marchenko}).
However, in such constructions the solutions $y\left( \omega ,x\right) $ are
considered which are not necessarily entire functions with respect to $%
\omega $. Moreover, it is often difficult to derive the initial conditions
fulfilled by $y\left( \omega ,x\right) $, that is to identify such
solutions. Whenever such identification is possible, it represents an
additional useful result (as, e.g., in \cite[p. 35]{Fedoryuk}).

In the present work we are interested in the solution $u\left( \omega
,x\right) $ of (\ref{SchrodIntro}) satisfying the initial conditions
\begin{equation*}
u\left( \omega ,0\right) =1,\quad u^{\prime }\left( \omega ,0\right)
=i\omega .
\end{equation*}%
It is entire with respect to $\omega $ and admits the representation
\begin{equation}
u\left( \omega ,x\right) =e^{i\omega x}+\int_{-x}^{x}K(x,t)e^{i\omega t}dt
\label{u=Intro}
\end{equation}%
where $K$ is known as a transmutation (or transformation) kernel \cite[%
Chapter I]{Marchenko}.

In spite of a vast bibliography dedicated to the transmutation operators
(see, e.g., \cite{BegehrGilbert}, \cite{Carroll}, \cite{LevitanInverse},
\cite{Marchenko}, \cite{Sitnik}, \cite{Trimeche}), only recently, in \cite%
{KNT} a general representation for the kernel $K$ was derived in the form of
a Fourier-Legendre series with respect to the variable $t$ and explicit
formulas requiring a recurrent integration for its coefficients as functions
of the variable $x$. More precisely, the kernel $K$ was constructed in \cite%
{KNT} in the form
\begin{equation}
K(x,t)=\sum_{n=0}^{\infty }\frac{\beta _{n}(x)}{x}P_{n}\left( \frac{t%
}{x}\right)  \label{KIntro}
\end{equation}%
where $P_{n}$ stands for a Legendre polynomial of order $n$ and $\left\{
\beta _{n}\right\} $ are the coefficients computed following a recurrent
integration procedure.

Substitution of (\ref{KIntro}) into (\ref{u=Intro}) leads \cite{KNT} to an
interesting representation of the solution $u\left( \omega ,x\right) $ in
the form of a Neumann series of Bessel functions (NSBF) (we refer to \cite%
{Watson} and \cite{Wilkins} for more information on this kind of series),
\begin{equation}
u\left( \omega ,x\right) =e^{i\omega x}+\sum_{n=0}^{\infty }i^{n}\beta
_{n}(x)j_{n}\left( \omega x\right)   \label{NSBFIntro}
\end{equation}%
where $j_{n}$ stands for a spherical Bessel function of order $n$.

An important feature of the representation (\ref{NSBFIntro}) reveals itself
when considering an approximate solution $u_{N}\left( \omega ,x\right)
=e^{i\omega x}+\sum_{n=0}^{N}i^{n}\beta _{n}(x)j_{n}\left( \omega x\right) $%
. Let, for simplicity, $\omega \in \mathbb{R}$. Then there exists a
nonnegative function $\varepsilon _{N}(x)$ tending to zero for all $x\in %
\left[0,b\right] $ and such that
\begin{equation*}
\left\vert u\left( \omega ,x\right) -u_{N}\left( \omega ,x\right)
\right\vert \leq \varepsilon _{N}(x)  
\end{equation*}%
for all $\omega \in \mathbb{R}$. That is, the approximate solution $%
u_{N}\left( \omega ,x\right) $ approximates the exact $u\left( \omega
,x\right) $ equally well for small and for large values of $\omega $. This uniformness of approximation is preserved for complex $\omega $
meanwhile $\omega $ belongs to some strip $\left\vert \mathop{\rm Im}\omega
\right\vert <C$ in the complex plane.

This unique feature of the representation (\ref{NSBFIntro}) is a direct
consequence of the fact that it was obtained from the transmutation operator
(\ref{u=Intro}), and as was shown in \cite{KNT}, it allows one to compute in
no time the solution $u\left( \omega ,x\right) $ on a large set of values of
the spectral parameter $\omega $ with the same accuracy.

A natural question then arises whether a representation of $u\left( \omega
,x\right) $ can be obtained that would admit an estimate of the form%
\begin{equation}
\left\vert u\left( \omega ,x\right) -u_{N}\left( \omega ,x\right)
\right\vert \leq \frac{\varepsilon _{N}(x)}{\left\vert \omega \right\vert
^{k}}  \label{estimIntrok}
\end{equation}%
for all $\omega \in \mathbb{R}\backslash \left\{ 0\right\} $ and for some $k>0$, that is an
estimate even improving for large values of $\omega $.

In the present paper we show that this result is indeed possible, it is
based again on the use of the transmutation operator. We show how it can be
obtained for an arbitrary $k$ but for the sake of simplicity restrict our
consideration to $k=2$. The starting point of this work is a suggestion from
\cite[p. 51]{Marchenko} which nonetheless is followed by the sentence:
\textquotedblleft It is however difficult to find the explicit form of the
coefficients and the remainder of this expansion.\textquotedblright\ We
derive a closed form of the coefficients mentioned and construct an NSBF
representation for the remainder. The coefficients of the NSBF are
calculated by a recurrent integration procedure.

The estimate (\ref{estimIntrok}) is illustrated by a numerical example.

\section{Preliminaries: the transmutation operator, the SPPS and NSBF
representations for the solutions of the one-dimensional Schr\"{o}dinger
equation}

Let $q\in C^{1}\left[ 0,b\right] $. Consider the equation
\begin{equation}
-u^{\prime\prime}+qu=\omega^{2}u\quad\text{on }\left( 0,b\right)
,\quad\omega\in\mathbb{C},  \label{Schrod}
\end{equation}
and its solution $u\left( \omega,x\right) $ satisfying the initial
conditions
\begin{equation*}
u\left( \omega,0\right) =1,\quad u^{\prime}\left( \omega,0\right) =i\omega.
\end{equation*}
It is well known that there exists a function $K(x,t)$ defined on the domain
$0\leq\left\vert t\right\vert \leq x \leq b$ and twice
continuously differentiable with respect to each of the arguments (see,
e.g., \cite[Chapter 1]{Marchenko}) such that
\begin{equation}
u\left( \omega,x\right) =e^{i\omega x}+\int_{-x}^{x}K(x,t)e^{i\omega
t}dt\qquad\text{for all }\omega\in\mathbb{C},  \label{U=Te}
\end{equation}
and
\begin{equation}
K(x,x)=\frac{Q(x)}{2}\qquad\text{and}\qquad K(x,-x)=0  \label{K(x,x)}
\end{equation}
for all $x\in\left[ 0,b\right] $, where $Q(x):=\int_{0}^{x}q(s)ds$.

Note that from the integral equation for the function $K$ \cite[Chapter 1]
{Marchenko}
\begin{equation}
\label{IntEq for K}K(x,t) = \frac{1}2Q\left(\frac{x+t}2\right) + \int_{0}^{\frac{x+t}2} \int_{0}^{\frac{x-t}2}q(\alpha
+\beta)K(\alpha+\beta, \alpha-\beta)\,d\beta\,d\alpha
\end{equation}
one easily obtains \cite{KT2016Boletin} the equalities
\begin{equation}
K_{2}(x,x):=\frac{q(x)}{4}-\frac{Q^{2}(x)}{8}\qquad\text{and}\qquad
K_{2}(x,-x)=\frac{q(0)}{4}.  \label{K2(x,x)}
\end{equation}
Here and below by $K_{2}(x,t)$ we denote the partial derivative with respect
to the second variable.

Throughout the paper we suppose that $f_{0}$ is a solution of the equation
\begin{equation}
f^{\prime\prime}-qf=0  \label{SchrHom}
\end{equation}
satisfying the initial conditions
\begin{equation*}
f_{0}(0)=1,\quad f_{0}^{\prime}(0)=0.
\end{equation*}

Consider two sequences of recursive integrals (see \cite{KrCV08}, \cite%
{KrPorter2010})
\begin{equation}
X^{(0)}(x)\equiv1,\qquad X^{(n)}(x)=n\int_{0}^{x}X^{(n-1)}(s)\left(
f_{0}^{2}(s)\right) ^{(-1)^{n}}\,\mathrm{d}s,\qquad n=1,2,\ldots  \label{Xn}
\end{equation}
and
\begin{equation}
\widetilde{X}^{(0)}\equiv1,\qquad\widetilde{X}^{(n)}(x)=n\int_{0}^{x}%
\widetilde{X}^{(n-1)}(s)\left( f_{0}^{2}(s)\right) ^{(-1)^{n-1}}\,\mathrm{d}%
s,\qquad n=1,2,\ldots.  \label{Xtilde}
\end{equation}

\begin{definition}
\label{Def Formal powers phik and psik}The family of functions $\left\{
\varphi_{k}\right\} _{k=0}^{\infty}$ constructed according to the rule
\begin{equation}
\varphi_{k}(x)=%
\begin{cases}
f_{0}(x)X^{(k)}(x), & k\text{\ odd}, \\
f_{0}(x)\widetilde{X}^{(k)}(x), & k\text{\ even}%
\end{cases}
\label{phik}
\end{equation}
is called the system of formal powers associated with $f_{0}$.
\end{definition}

\begin{remark}
\label{Rem On particular solutions}If $f_{0}$ has zeros some of the
recurrent integrals \eqref{Xn} or \eqref{Xtilde} may not exist, although
even in that case the formal powers \eqref{phik} are well defined. It is
convenient to construct them in the following way. Take a nonvanishing
solution $f$ of \eqref{SchrHom} such that $f(0)=1$. Such solution always exists, see \cite[Remark 5]{KrPorter2010} or \cite{Camporesi et al 2011}, and for real-valued potential $q$ can be easily constructed explicitly. Indeed, one can take $f=f_{0}+if_{1}$ where $f_{1}$ is a solution of \eqref{SchrHom} satisfying $f_{1}(0)=0$, $f_{1}^{\prime }(0)=1$.
Then (see \cite[Proposition 4.7]{KT Birkhauser 2013})
\begin{equation*}
\varphi _{k}=%
\begin{cases}
\Phi _{k}, & k\text{\ odd,} \\
\Phi _{k}-\frac{f^{\prime }(0)}{k+1}\Phi _{k+1}, & k\text{\ even,}%
\end{cases}%
\end{equation*}%
where $\Phi _{k}$ are formal powers associated with $f$.
\end{remark}

Let us recall two series representations of the solution $u(\omega,x)$ which
are used in the present paper.

\begin{theorem}[Spectral Parameter Power Series, \cite{KrCV08}]
The solution $%
u(\omega,x)$ has the form
\begin{equation*}
u(\omega,x)=\sum_{n=0}^{\infty}\frac{\left( i\omega\right) ^{n}\varphi
_{n}(x)}{n!}.  
\end{equation*}
The series converges uniformly with respect to $x$ on $[0,b]$ and uniformly
with respect to $\omega$ on any compact subset of the complex plane of the
variable $\omega$.
\end{theorem}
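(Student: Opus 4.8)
The plan is to verify directly that the series $\sum_{n=0}^{\infty}\frac{(i\omega)^{n}\varphi_{n}(x)}{n!}$ solves \eqref{Schrod} with the prescribed initial data, and that it converges in the claimed manner. First I would record the basic recursion satisfied by the formal powers: from the definitions \eqref{Xn}, \eqref{Xtilde} of the recursive integrals and formula \eqref{phik} one checks that $\varphi_{0}=f_{0}$, $\varphi_{1}=f_{0}\int_{0}^{x}ds/f_{0}^{2}$, and more importantly that the pair $\{\varphi_{n}\}$ satisfies a second-order recursion coming from the Pólya factorization of the operator $\frac{d^{2}}{dx^{2}}-q$. Concretely, writing $L=\frac{d^{2}}{dx^{2}}-q$, one has $L f_{0}=0$, and a computation using $(f_{0}^{2}(X^{(n)})')'=n(n-1)f_{0}^{2}(f_{0}^{2})^{\mp 1}X^{(n-2)}\cdot(\dots)$ — i.e. differentiating \eqref{phik} twice and using \eqref{Xn}–\eqref{Xtilde} — yields
\begin{equation*}
\varphi_{n}''-q\varphi_{n}=n(n-1)\varphi_{n-2},\qquad n\geq 2,
\end{equation*}
together with $\varphi_{n}(0)=0$ for $n\geq 1$, $\varphi_{0}(0)=1$, and $\varphi_{n}'(0)=0$ for $n\geq 2$, $\varphi_{1}'(0)=1$, $\varphi_{0}'(0)=0$. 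These identities are the computational heart of the matter; they are essentially the content of the SPPS construction in \cite{KrCV08}, so I would either cite them or carry out the two-line differentiation.

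Granting the recursion, termwise application of $L$ to $u(\omega,x)=\sum_{n}\frac{(i\omega)^{n}}{n!}\varphi_{n}$ gives
\begin{equation*}
Lu=\sum_{n=2}^{\infty}\frac{(i\omega)^{n}}{n!}n(n-1)\varphi_{n-2}
=\sum_{m=0}^{\infty}\frac{(i\omega)^{m+2}}{m!}\varphi_{m}
=(i\omega)^{2}u=-\omega^{2}u,
\end{equation*}
which is exactly \eqref{Schrod}; and evaluating the series and its $x$-derivative at $x=0$ using the initial values above gives $u(\omega,0)=1$, $u'(\omega,0)=i\omega$. So the only remaining issue is to justify termwise differentiation, which reduces to proving locally uniform convergence of the series and of the series of first and second derivatives.

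The convergence estimate is the step I expect to require the most care. I would bound $|\varphi_{n}(x)|$ by growing its two building blocks: on $[0,b]$ the solution $f_{0}$ and $1/f_{0}$ (or, invoking Remark~\ref{Rem On particular solutions}, the nonvanishing $f$ and $1/f$, so that no zeros obstruct the estimate) are bounded, say by a constant $M$; then from \eqref{Xn}–\eqref{Xtilde} an easy induction gives $|X^{(n)}(x)|,|\widetilde X^{(n)}(x)|\leq (M^{2}x)^{n}$ — indeed $|X^{(n)}(x)|\leq n\int_{0}^{x}|X^{(n-1)}(s)|M^{2}\,ds\leq nM^{2}\int_{0}^{x}(M^{2}s)^{n-1}ds=(M^{2}x)^{n}$ — hence $|\varphi_{n}(x)|\leq M(M^{2}b)^{n}$ on $[0,b]$. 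Therefore the $n$-th term is dominated by $\frac{|\omega|^{n}M(M^{2}b)^{n}}{n!}$, whose sum over $n$ is $M e^{|\omega| M^{2} b}$, finite and bounded uniformly for $x\in[0,b]$ and for $\omega$ in any compact set; the Weierstrass $M$-test then gives uniform convergence. For the derivatives one differentiates \eqref{phik}, using $\varphi_{k}'=f_{0}'X^{(k)}+k f_{0}(f_{0}^{2})^{(-1)^{k}}X^{(k-1)}$ (odd case) and similarly in the even case, producing the same type of geometric bound with a slightly larger constant, so the differentiated series converge locally uniformly as well. This legitimizes the termwise operations above and completes the proof. \rule{0.5em}{0.5em}
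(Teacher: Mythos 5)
The paper does not prove this theorem: it is quoted in the Preliminaries as a known result from \cite{KrCV08}, so there is no in-paper argument to compare against. Your reconstruction is the standard SPPS proof and is essentially correct: the identity $\varphi_{n}''-q\varphi_{n}=n(n-1)\varphi_{n-2}$ together with $\varphi_{0}(0)=1$, $\varphi_{1}'(0)=1$ and the vanishing of the remaining initial data does follow from differentiating \eqref{phik} with \eqref{Xn}--\eqref{Xtilde} (the alternation of the exponents $(-1)^{n}$ and $(-1)^{n-1}$ is exactly what makes the $f_{0}'$ cross terms cancel), and your induction $|X^{(n)}|,|\widetilde X^{(n)}|\leq (M^{2}x)^{n}$ gives the exponential majorant needed for the Weierstrass test. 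Two small points deserve a sentence each. First, when $f_{0}$ vanishes you switch to the nonvanishing $f$ of Remark \ref{Rem On particular solutions}; you should then check that the recursion survives the substitution $\varphi_{k}=\Phi_{k}-\frac{f'(0)}{k+1}\Phi_{k+1}$ (it does: $\Phi_{k}''-q\Phi_{k}=k(k-1)\Phi_{k-2}$ gives $\varphi_{k}''-q\varphi_{k}=k(k-1)\Phi_{k-2}-kf'(0)\Phi_{k-1}=k(k-1)\varphi_{k-2}$, since the correction term telescopes consistently) and that the bounds on $\Phi_{k}$ transfer to $\varphi_{k}$, which they do because the coefficients $\frac{f'(0)}{k+1}$ are bounded. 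Second, for the termwise second differentiation it is cleaner to bound $\varphi_{n}''=q\varphi_{n}+n(n-1)\varphi_{n-2}$ directly from the recursion rather than differentiate \eqref{phik} twice; the factor $n(n-1)$ is harmless against $1/n!$. With those two remarks supplied, the argument is complete.
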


\begin{theorem}[Neumann series of Bessel functions, \cite{KNT}] The solution $u(\omega,x)$
admits the representation
\begin{equation}
u\left( \omega,x\right) =e^{i\omega x}+\sum_{n=0}^{\infty}i^{n}\beta
_{n}(x)j_{n}\left( \omega x\right)  \label{uNSBF}
\end{equation}
where $j_{n}$ stands for the spherical Bessel function of order $n$, the
series converges uniformly with respect to $x$ on $[0,b]$ and converges
uniformly with respect to $\omega$ on any compact subset of the complex
plane of the variable $\omega$, the coefficients $\beta_{n}$ have the form
\begin{equation*}
\beta_{n}(x)=\frac{2n+1}{2}\left( \sum_{k=0}^{n}\frac{l_{k,n}\varphi_{k}(x)}{%
x^{k}}-1\right)  
\end{equation*}
where $l_{k,n}$ is the coefficient of $x^{k}$ in the Legendre polynomial of
order $n$, that is $P_{n}(x)=\sum_{k=0}^{n}l_{k,n}x^{k}$.
\end{theorem}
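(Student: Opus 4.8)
The plan is to combine the transmutation representation \eqref{U=Te} with the classical Jacobi--Anger (Rayleigh) expansion of the exponential in spherical Bessel functions and Legendre polynomials, and then to obtain the closed form of the coefficients from the action of the transmutation operator on powers.

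First I would recall the expansion $e^{izs}=\sum_{n=0}^{\infty}(2n+1)i^{n}j_{n}(z)P_{n}(s)$, valid for $s\in[-1,1]$ and all $z\in\mathbb{C}$; it converges absolutely and uniformly in $s\in[-1,1]$ and in $z$ on compact sets, which is immediate from $\int_{-1}^{1}P_{n}(s)e^{izs}\,ds=2i^{n}j_{n}(z)$, the orthogonality and completeness of the Legendre polynomials, and the bound $|j_{n}(z)|\le C_{R}R^{n}/(2n+1)!!$ for $|z|\le R$. Substituting the resulting expansion $e^{i\omega t}=\sum_{n}(2n+1)i^{n}j_{n}(\omega x)P_{n}(t/x)$ (valid for $|t|\le x$) into \eqref{U=Te}, using that $K(x,\cdot)$ is continuous, hence bounded, on $[-x,x]$, and interchanging summation and integration, I obtain a representation $u(\omega,x)=e^{i\omega x}+\sum_{n=0}^{\infty}i^{n}\beta_{n}(x)j_{n}(\omega x)$ in which $\beta_{n}(x)$ is, up to a fixed normalising constant, the $n$-th Fourier--Legendre coefficient of $s\mapsto K(x,xs)$ on $[-1,1]$; equivalently, the same conclusion follows, as in \cite{KNT}, by first expanding $K$ in a Fourier--Legendre series in $t$ and using the identity $\int_{-x}^{x}P_{n}(t/x)e^{i\omega t}\,dt=2x\,i^{n}j_{n}(\omega x)$.

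The closed form of the coefficients is the short part. The key point is that the transmutation operator $\mathbf{T}v:=v+\int_{-x}^{x}K(x,t)v(t)\,dt$ sends the monomial $t^{k}$ to the formal power $\varphi_{k}$. Indeed $\mathbf{T}[e^{i\omega\,\cdot}]=u(\omega,\cdot)$ by \eqref{U=Te}; expanding the left-hand side by the Taylor series of the exponential (uniformly convergent on $[-x,x]$) and the right-hand side by the SPPS representation, and equating the coefficients of $\omega^{k}$, gives $\mathbf{T}[t^{k}]=\varphi_{k}$. Expanding $P_{n}(t/x)=\sum_{k=0}^{n}l_{k,n}x^{-k}t^{k}$, applying $\mathbf{T}$ termwise, and using $\sum_{k=0}^{n}l_{k,n}=P_{n}(1)=1$, I get $\int_{-x}^{x}K(x,t)P_{n}(t/x)\,dt=\sum_{k=0}^{n}l_{k,n}\varphi_{k}(x)/x^{k}-1$; inserting this into the expression for $\beta_{n}$ and keeping track of the normalising constant yields $\beta_{n}(x)=\frac{2n+1}{2}\bigl(\sum_{k=0}^{n}l_{k,n}\varphi_{k}(x)/x^{k}-1\bigr)$.

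What remains, and what I expect to be the main technical obstacle, is the uniform convergence of the series. Since $K$ is twice continuously differentiable on $\{0\le|t|\le x\le b\}$, the function $s\mapsto K(x,xs)$ is of class $C^{2}$ on $[-1,1]$ with norms bounded uniformly for $x\in[0,b]$ (its second $s$-derivative equals $x^{2}K_{22}(x,xs)$, bounded by $b^{2}\|K_{22}\|_{\infty}$), so integrating twice by parts against $P_{n}$ gives $|\beta_{n}(x)|=O(n^{-2})$ uniformly in $x\in[0,b]$; together with $|j_{n}(\omega x)|\le C_{R}$ for $|\omega|\le R$ and $x\in[0,b]$ (indeed $j_{n}(\omega x)\to0$ as $n\to\infty$) this produces a convergent majorant independent of $x$ and of $\omega$ on compact sets. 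Two points need care: (i) that all estimates survive the degeneration of the substitution $t=xs$ as $x\to0$---here the apparent singularity $x^{-k}$ in the closed form is removable because $\varphi_{k}(x)/x^{k}\to1$, so each $\beta_{n}$ extends continuously to $x=0$ and the representation stays valid there---and (ii) the bookkeeping in the two integrations by parts, where the Legendre identity $(2n+1)P_{n}=P_{n+1}'-P_{n-1}'$ makes the relevant boundary contributions vanish automatically.
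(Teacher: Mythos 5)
Your route is the same one the paper relies on: the theorem is imported from \cite{KNT} without proof here, but the identical machinery --- Fourier--Legendre expansion of the kernel in $t/x$, term-by-term integration using $\int_{-x}^{x}P_{n}(t/x)e^{i\omega t}\,dt=2x\,i^{n}j_{n}(\omega x)$, and extraction of the moments $\int_{-x}^{x}K(x,t)t^{k}\,dt=\varphi_{k}(x)-x^{k}$ by matching the SPPS expansion against the Taylor expansion of $e^{i\omega t}$ --- is exactly what the paper itself carries out for $K_{22}$ and the coefficients $\alpha_{n}$ in the derivation of \eqref{ualpha}. In outline your proof is the intended one, and your remarks on convergence (decay of the Fourier--Legendre coefficients from $K(x,\cdot)\in C^{1}$ or $C^{2}$, uniform boundedness of $j_{n}(\omega x)$ on compacts, removability of the $x^{-k}$ singularity at $x=0$) are sound.

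The one step you explicitly defer (``up to a fixed normalising constant'', ``keeping track of the normalising constant yields\dots'') is the only step where something substantive can go wrong, and you must actually carry it out. If $\beta_{n}(x)=\frac{2n+1}{2}\int_{-x}^{x}K(x,t)P_{n}(t/x)\,dt$ is the Fourier--Legendre coefficient attached to the expansion \eqref{KIntro}, then $\int_{-x}^{x}K(x,t)e^{i\omega t}\,dt=2\sum_{n}i^{n}\beta_{n}(x)j_{n}(\omega x)$, so the coefficient multiplying $i^{n}j_{n}(\omega x)$ in a representation of the form \eqref{uNSBF} comes out as $(2n+1)\bigl(\sum_{k=0}^{n}l_{k,n}\varphi_{k}(x)/x^{k}-1\bigr)$, i.e.\ twice the displayed closed form. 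The evaluation at $\omega=0$, where $u(0,x)=f_{0}(x)=\varphi_{0}(x)$ and $j_{n}(0)=\delta_{n0}$, forces $\beta_{0}(x)=\varphi_{0}(x)-1$ in \eqref{uNSBF}, not $\tfrac12(\varphi_{0}(x)-1)$, and pins the constant down unambiguously. So your assertion that the bookkeeping ``yields'' $\frac{2n+1}{2}(\cdots)$ is not something your computation actually delivers: either the kernel expansion \eqref{KIntro} and the solution expansion \eqref{uNSBF} cannot both carry the same $\beta_{n}$ with that closed form, or an extra factor $2$ must appear in front of the series. Do the constant honestly and state which normalisation you are using.
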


\section{A representation for the solutions of the one-dimensional Schr\"{o}%
dinger equation}

\begin{proposition}
The solution $u\left( \omega,x\right) $ admits the representation%
\begin{equation}
\begin{split}
u\left( \omega,x\right) &=e^{i\omega x}\left( 1+\frac{Q(x)}{2i\omega}+
\frac{1}{\omega^{2}}\left( \frac{q(x)}{4}-\frac{Q^{2}(x)}{8}\right) \right)\\
&\quad -\frac{q(0)}{4}\frac{e^{-i\omega x}}{\omega^{2}}-\frac{1}{\omega^{2}}\int
_{-x}^{x}K_{22}(x,t)e^{i\omega t}dt\qquad\text{for all }\omega\in\mathbb{C}.
\end{split}
\label{u}
\end{equation}
\end{proposition}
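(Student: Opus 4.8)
The plan is to start from the transmutation representation \eqref{U=Te}, namely $u(\omega,x) = e^{i\omega x} + \int_{-x}^{x} K(x,t) e^{i\omega t}\,dt$, and integrate by parts twice in $t$ to pull out the leading asymptotic terms in powers of $1/\omega$. Since $e^{i\omega t} = \frac{1}{i\omega}\frac{d}{dt}e^{i\omega t}$, the first integration by parts produces boundary terms at $t = \pm x$ involving $K(x,\pm x)$ together with $\frac{-1}{i\omega}\int_{-x}^{x} K_2(x,t) e^{i\omega t}\,dt$, where $K_2$ is the partial derivative in the second argument as fixed by the notation in the excerpt. A second integration by parts on the remaining integral yields boundary terms involving $K_2(x,\pm x)$ and a final integral $\frac{-1}{\omega^2}\int_{-x}^{x} K_{22}(x,t) e^{i\omega t}\,dt$. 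Collecting everything gives
\begin{equation*}
u(\omega,x) = e^{i\omega x} + \frac{1}{i\omega}\Bigl(K(x,x)e^{i\omega x} - K(x,-x)e^{-i\omega x}\Bigr) - \frac{1}{\omega^2}\Bigl(K_2(x,x)e^{i\omega x} - K_2(x,-x)e^{-i\omega x}\Bigr) - \frac{1}{\omega^2}\int_{-x}^{x} K_{22}(x,t)e^{i\omega t}\,dt.
\end{equation*}

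Next I would substitute the known boundary values. By \eqref{K(x,x)} we have $K(x,x) = Q(x)/2$ and $K(x,-x) = 0$, so the $1/(i\omega)$ term collapses to $\frac{Q(x)}{2i\omega}e^{i\omega x}$, matching the first correction in \eqref{u}. By \eqref{K2(x,x)} we have $K_2(x,x) = \frac{q(x)}{4} - \frac{Q^2(x)}{8}$ and $K_2(x,-x) = \frac{q(0)}{4}$; plugging these into the $1/\omega^2$ boundary terms produces exactly $-\frac{1}{\omega^2}\bigl(\frac{q(x)}{4} - \frac{Q^2(x)}{8}\bigr)e^{i\omega x} + \frac{q(0)}{4}\frac{e^{-i\omega x}}{\omega^2}$. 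Wait — the sign: the bracket carries an overall minus, so the $e^{i\omega x}$ coefficient contributes $-\frac{1}{\omega^2}\bigl(\frac{q(x)}{4}-\frac{Q^2(x)}{8}\bigr)$, which must be moved inside the factor $e^{i\omega x}(\cdots)$ with the opposite sign of what appears in \eqref{u}; I need to track this carefully. Re-examining: \eqref{u} has $e^{i\omega x}\bigl(1 + \frac{Q(x)}{2i\omega} + \frac{1}{\omega^2}(\frac{q(x)}{4}-\frac{Q^2(x)}{8})\bigr)$ with a \emph{plus} sign on the $1/\omega^2$ term, so in fact the integration by parts must produce $+K_2$ boundary contributions, i.e.\ the second integration by parts gives $+\frac{1}{\omega^2}(K_2(x,x)e^{i\omega x} - \cdots)$ and the remainder integral is $+\frac{1}{(i\omega)^2}\int K_{22} = -\frac{1}{\omega^2}\int K_{22}$; I will verify the two minus signs from $1/(i\omega)$ squared cancel appropriately so the final remainder integral indeed appears with coefficient $-1/\omega^2$ as in \eqref{u}.

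The only genuine content beyond bookkeeping is justifying the integration by parts — that $K(x,\cdot)$ is $C^1$ in the second argument and that $K_2(x,\cdot)$ is absolutely continuous (or $C^1$) in $t$ so that $K_{22}$ exists as an integrable function — and confirming the boundary-value formulas \eqref{K(x,x)} and \eqref{K2(x,x)}. The regularity is supplied by the cited fact from \cite[Chapter 1]{Marchenko} that $K$ is twice continuously differentiable in each argument (this requires $q \in C^1$, which is assumed); the boundary values \eqref{K(x,x)}, \eqref{K2(x,x)} are quoted in the excerpt (the latter derived from the integral equation \eqref{IntEq for K} as in \cite{KT2016Boletin}). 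So the main obstacle is purely the careful sign arithmetic across the two integrations by parts and the two factors of $i$ — there is no deep step. I would finish by noting that the resulting identity holds for all $\omega \in \mathbb{C}$ since both sides are entire in $\omega$ (the integral term is entire because the integrand is continuous in $t$ and analytic in $\omega$), so the representation extends from, say, $\omega \neq 0$ to all of $\mathbb{C}$ by continuity, or equivalently is valid wherever the $1/\omega$ and $1/\omega^2$ factors make sense, i.e.\ $\omega \in \mathbb{C}\setminus\{0\}$, which is the natural domain of the stated formula.
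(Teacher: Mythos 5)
Your proof is correct and follows essentially the same route as the paper: two integrations by parts in $t$ applied to the transmutation representation \eqref{U=Te}, followed by substitution of the boundary values \eqref{K(x,x)} and \eqref{K2(x,x)}, with the regularity of $K$ needed to justify the integrations by parts supplied by the cited fact from Marchenko. The sign slip in your displayed intermediate formula (the $K_{2}$ boundary term should carry $+1/\omega^{2}$, since $1/(i\omega)^{2}=-1/\omega^{2}$, while the remainder integral then carries $-1/\omega^{2}$) is one you flag and resolve correctly yourself, so nothing essential is missing.
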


\begin{proof}
Multiplication of (\ref{U=Te}) by $i\omega$ and integration by parts with
the aid of (\ref{K(x,x)}) gives us the equality
\begin{equation}\label{K2}
\begin{split}
i\omega u\left( \omega,x\right) & =i\omega e^{i\omega x}+\int_{-x}^{x}K(x,t)%
\frac{de^{i\omega t}}{dt}dt \\
& =i\omega e^{i\omega x}+\frac{Q(x)}{2}e^{i\omega
x}-\int_{-x}^{x}K_{2}(x,t)e^{i\omega t}dt.
\end{split}
\end{equation}
Consider
\begin{align*}
i\omega\int_{-x}^{x}K_{2}(x,t)e^{i\omega t}dt & =\int_{-x}^{x}K_{2}(x,t)%
\frac{de^{i\omega t}}{dt}dt \\
& =K_{2}(x,x)e^{i\omega x}-K_{2}(x,-x)e^{-i\omega
x}-\int_{-x}^{x}K_{22}(x,t)e^{i\omega t}dt.
\end{align*}
Using (\ref{K2(x,x)}) we obtain
\begin{equation*}
\int_{-x}^{x}K_{2}(x,t)e^{i\omega t}dt=\frac{1}{i\omega}\left( e^{i\omega
x}\left( \frac{q(x)}{4}-\frac{Q^{2}(x)}{8}\right) -\frac{q(0)e^{-i\omega x}}{%
4}-\int_{-x}^{x}K_{22}(x,t)e^{i\omega t}dt\right) .
\end{equation*}
Substitution of this expression into (\ref{K2}) gives us (\ref{u}).
\end{proof}

\begin{remark}
By the Riemann-Lebesgue lemma on the decrease at infinity of the Fourier
transform of an $L_{1}(-\infty ,\infty )$-function, the integral $%
\int_{-x}^{x}K_{22}(x,t)e^{i\omega t}dt$ tends to zero when $\omega
\rightarrow \pm \infty $, and thus from \eqref{u} the asymptotic equality
follows
\begin{equation*}
u\left( \omega ,x\right) =e^{i\omega x}\left( 1+\frac{Q(x)}{2i\omega }+\frac{%
1}{\omega ^{2}}\left( \frac{q(x)}{4}-\frac{Q^{2}(x)}{8}\right) \right) -%
\frac{q(0)}{4}\frac{e^{-i\omega x}}{\omega ^{2}}+o\left( \frac{1}{\omega ^{2}%
}\right)
\end{equation*}%
when $\omega \rightarrow \pm \infty $.
\end{remark}

\begin{remark}Repeating this integration by parts procedure one can obtain representations involving higher order terms of $\frac 1{\omega}$ (and hence estimates \eqref{estimIntrok} with $k>2$). Explicit expressions for the terms $K_{2\ldots 2}(x,x)$ and $K_{2\ldots 2}(x,-x)$ resulting from these integrations by parts can be derived repeatedly differentiating the integral equation \eqref{IntEq for K} with respect to $t$.
\end{remark}

\section{Fourier-Legendre series expansion of the kernel $K_{22}$}

The aim of this section is to derive a representation of $K_{22}$ in the form%
\begin{equation}
K_{22}(x,t)=\sum_{n=0}^{\infty}\frac{\alpha_{n}(x)}{x}P_{n}\left( \frac{t}{x}%
\right)  \label{F-L kernel}
\end{equation}
where $P_{n}$ stands for the Legendre polynomial of order $n$, and $\alpha
_{n}$ are to be found. Note that for every $x$ fixed the function $%
K_{22}(x,\cdot)$ is continuous and hence admits a Fourier-Legendre
representation of the form (\ref{F-L kernel}) which is convergent in the
sense of $L_{2}$-norm and hence
\begin{equation}
\varepsilon_{N}(x):=\left\Vert K_{22}(x,\cdot)-K_{22,N}(x,\cdot)\right\Vert
_{L_{2}\left( -x,x\right) }\rightarrow 0, \quad N\rightarrow\infty
\label{epsilonN}
\end{equation}
for all $x\in\left[ 0,b\right] $ where $K_{22,N}(x,\cdot):=\sum_{n=0}^{N}%
\frac{\alpha_{n}(x)}{x}P_{n}\left( \frac{t}{x}\right) $. Some more precise decay rate estimates for the function $\varepsilon_N(x)$ may be obtained similarly to \cite[Theorem 3.3]{KNT} and \cite[Proposition A.2]{KT PQR}.

If $q\in C^{2}\left[ 0,b\right] $ and hence $K_{22}(x,\cdot)\in C^{1}\left[
-b,b\right] $, the series in (\ref{F-L kernel}) converges uniformly with
respect to $t\in\left[ -x,x\right] $ (see, e.g., \cite{Suetin}).

As a first step let us prove the following equalities. Denote
\begin{equation*}
\mathbf{k}_{n}(x):=\int_{-x}^{x}K_{22}(x,t)t^{n}dt,\quad n=0,1,2,\ldots.
\end{equation*}

\begin{proposition}
The following equalities are valid%
\begin{equation*}
\mathbf{k}_{0}(x)=\frac{q(x)}{4}-\frac{Q^{2}(x)}{8}-\frac{q(0)}{4},
\end{equation*}%
\begin{equation*}
\mathbf{k}_{1}(x)=\left( \frac{q(x)}{4}-\frac{Q^{2}(x)}{8}+\frac{q(0)}{4}%
\right) x-\frac{Q(x)}{2},
\end{equation*}%
\begin{equation*}
\mathbf{k}_{n}(x)=n\left( n-1\right) \left( \varphi_{n-2}(x)-x^{n-2}\right)
+\left( \frac{q(x)}{4}-\frac{Q^{2}(x)}{8}-\frac{(-1)^{n}q(0)}{4}\right)
x^{n}-\frac{nQ(x)x^{n-1}}{2},
\end{equation*}
$n=2,3,\ldots$.
\end{proposition}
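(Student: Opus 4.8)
The plan is to reduce the computation of $\mathbf{k}_{n}(x)$ to two integrations by parts combined with a ``moment identity'' for the transmutation kernel itself. First I would establish that for every $n\ge 0$
\begin{equation*}
\int_{-x}^{x}K(x,t)t^{n}\,dt=\varphi_{n}(x)-x^{n}.
\end{equation*}
This follows by inserting the Taylor series of $e^{i\omega t}$ into the integral in \eqref{U=Te}: since $K(x,\cdot)$ is continuous on the compact segment $[-x,x]$, the series $\sum_{n}(i\omega t)^{n}/n!$ converges uniformly there and may be integrated termwise, so that $u(\omega,x)-e^{i\omega x}=\sum_{n}\frac{(i\omega)^{n}}{n!}\int_{-x}^{x}K(x,t)t^{n}\,dt$. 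Comparing with the Spectral Parameter Power Series representation of $u(\omega,x)$ and with $e^{i\omega x}=\sum_{n}(i\omega)^{n}x^{n}/n!$, and using uniqueness of the coefficients of a power series in $\omega$ (both sides being entire), yields the identity.

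Next, fixing $x$ and using that $q\in C^{1}[0,b]$ ensures $K(x,\cdot)\in C^{2}[-x,x]$, I would integrate by parts twice in $\mathbf{k}_{n}(x)=\int_{-x}^{x}K_{22}(x,t)t^{n}\,dt$. The first integration produces the boundary contribution $K_{2}(x,x)x^{n}-K_{2}(x,-x)(-x)^{n}$, evaluated via \eqref{K2(x,x)}, and leaves $-n\int_{-x}^{x}K_{2}(x,t)t^{n-1}\,dt$. For $n\ge 1$ the second integration produces $K(x,x)x^{n-1}-K(x,-x)(-x)^{n-1}$, evaluated via \eqref{K(x,x)}, and leaves $-n(n-1)\int_{-x}^{x}K(x,t)t^{n-2}\,dt$, which for $n\ge 2$ equals $-n(n-1)\bigl(\varphi_{n-2}(x)-x^{n-2}\bigr)$ by the moment identity. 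Collecting these terms and using $(-x)^{m}=(-1)^{m}x^{m}$ gives the stated formula for $n\ge 2$.

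Finally, the cases $n=0$ and $n=1$ are handled directly: $\mathbf{k}_{0}(x)=\int_{-x}^{x}K_{22}(x,t)\,dt=K_{2}(x,x)-K_{2}(x,-x)$, and for $n=1$ the single remaining integral is $\int_{-x}^{x}K_{2}(x,t)\,dt=K(x,x)-K(x,-x)$; both are then evaluated using \eqref{K(x,x)} and \eqref{K2(x,x)}. The argument is essentially bookkeeping; the only points that warrant a word of justification are the termwise integration in the first step (uniform convergence of the exponential series on the compact interval, plus uniqueness of power-series coefficients) and the $C^{2}$-regularity of $K$ in $t$ needed to integrate by parts twice. I expect no genuine obstacle beyond carefully tracking the factor $(-1)^{n}$ arising from the boundary terms at $t=-x$.
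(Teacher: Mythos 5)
Your argument is correct in substance but organized differently from the paper's. The paper obtains this Proposition by multiplying the already-derived representation \eqref{u} by $(i\omega)^{2}$, inserting the SPPS representation of $u(\omega,x)$ together with the exponential series for $e^{\pm i\omega x}$ and for $e^{i\omega t}$ under the integral, and equating coefficients of equal powers of $\omega$; the formal powers $\varphi_{n-2}$ enter through the SPPS series, and the two integrations by parts are hidden in the proof of \eqref{u}. You instead perform the power-series matching once, at the level of $K$ itself, to obtain the moment identity $\int_{-x}^{x}K(x,t)t^{n}\,dt=\varphi_{n}(x)-x^{n}$, and then integrate by parts twice directly on the polynomial moments $\mathbf{k}_{n}$. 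The ingredients (SPPS, the boundary values \eqref{K(x,x)} and \eqref{K2(x,x)}, two integrations by parts) are identical, so this is essentially the same computation carried out in the $t$-variable rather than the $\omega$-variable; your version has the small advantage of making the moment identity explicit, while the paper's is shorter because it reuses \eqref{u}. One sign needs fixing: after the two integrations by parts the surviving integral enters $\mathbf{k}_{n}$ with a plus sign,
\begin{equation*}
\mathbf{k}_{n}(x)=K_{2}(x,x)x^{n}-K_{2}(x,-x)(-x)^{n}-n\bigl(K(x,x)x^{n-1}-K(x,-x)(-x)^{n-1}\bigr)+n(n-1)\int_{-x}^{x}K(x,t)t^{n-2}\,dt,
\end{equation*}
whereas you wrote that the second integration leaves $-n(n-1)\int_{-x}^{x}K(x,t)t^{n-2}\,dt$; with that sign you would not recover the stated term $+n(n-1)\left(\varphi_{n-2}(x)-x^{n-2}\right)$. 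This is a bookkeeping slip, not a flaw in the method.
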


\begin{proof}
Multiply (\ref{u}) by $\left( i\omega\right) ^{2}$ and make use of the SPPS
representation of the solution $u\left( \omega,x\right) $. Then
\begin{align*}
\sum_{n=0}^{\infty}\frac{\left( i\omega\right) ^{n+2}\varphi_{n}(x)}{n!} &
=\sum_{n=0}^{\infty}\frac{\left( i\omega\right) ^{n+2}x^{n}}{n!}+\frac {Q(x)%
}{2}\sum_{n=0}^{\infty}\frac{\left( i\omega\right) ^{n+1}x^{n}}{n!} \\
& \quad-\left( \frac{q(x)}{4}-\frac{Q^{2}(x)}{8}\right) \sum_{n=0}^{\infty}\frac{%
\left( i\omega\right) ^{n}x^{n}}{n!}+\frac{q(0)}{4}\sum_{n=0}^{\infty }\frac{%
\left( -i\omega\right) ^{n}x^{n}}{n!} \\
& \quad+\sum_{n=0}^{\infty}\frac{\left( i\omega\right) ^{n}}{n!}%
\int_{-x}^{x}K_{22}(x,t)t^{n}dt.
\end{align*}
Equating terms at equal powers of $\omega$ we obtain the required equalities.
\end{proof}

From (\ref{F-L kernel}) we immediately obtain a formula for the coefficients
\begin{equation*}
\alpha_{n}(x)  =\frac{2n+1}{2}\int_{-x}^{x}K_{22}(x,t)P_{n}\left( \frac {t}{%
x}\right) dt =\frac{2n+1}{2}\sum_{m=0}^{n}l_{m,n}\frac{\mathbf{k}_{m}(x)}{x^{m}}.
\end{equation*}
In particular, introducing the notation%
\begin{equation*}
q_{\pm}(x):=\frac{q(x)}{4}-\frac{Q^{2}(x)}{8}\pm\frac{q(0)}{4}
\end{equation*}
we can write
\begin{align}
\alpha_{0}(x)&=\frac{q_{-}(x)}{2},\qquad\alpha_{1}(x)=\frac{3}{2}\left(
q_{+}(x)-\frac{Q(x)}{2x}\right) ,\label{alpha_0} \\
\alpha_{2}(x)&=\frac{5}{2}\left(
q_{-}(x)+\frac{3}{x^{2}}\left( \varphi_{0}(x)-1\right) -\frac{3Q(x)}{2x}%
\right) ,\label{alpha 2} \\
\alpha_{3}(x)&=\frac{7}{2}\left( q_{+}(x)+\frac{15}{x^{3}}\left( \varphi
_{1}(x)-x\right) -\frac{3Q(x)}{2x}\right) .  \label{alpha_3}
\end{align}

In Section \ref{Sect Relation coeff} we derive a formula relating the
coefficients $\alpha_{n}$ with $\beta_{n}$, which together with the
equalities (\ref{alpha_0})--(\ref{alpha_3}) and a recurrent integration
procedure for computation of the coefficients $\beta_{n}$ from \cite{KNT}
leads to a convenient numerical algorithm for computing the coefficients $%
\alpha_{n}$.

We finish this section by the following observation, useful for controlling
the approximation accuracy.

\begin{remark}
Second differentiation of the integral equation for $K$ in analogy with \eqref{K2(x,x)} leads to the equalities
\begin{equation*}
K_{22}(x,x)=\frac{1}{8}\left( q^{\prime}(x)-q(x)Q(x)-\int_{0}^{x}q^{2}(s)ds+%
\frac{Q^{3}(x)}{6}\right)
\end{equation*}
and%
\begin{equation*}
K_{22}(x,-x)=\frac{1}{8}\left( q^{\prime}(0)+q(0)Q(x)\right) .
\end{equation*}
They can be used for a simple and efficient checking of approximation
accuracy. Indeed, from \eqref{F-L kernel} we have
\begin{equation*}
K_{22}(x,x)=\frac{1}{x}\sum_{n=0}^{\infty}\alpha_{n}(x)\qquad\text{and}\qquad
K_{22}(x,-x)=\frac{1}{x}\sum_{n=0}^{\infty}\left( -1\right) ^{n}\alpha
_{n}(x)
\end{equation*}
and hence the magnitudes
\begin{equation*}
\epsilon_{1}(x):=\left\vert \frac{1}{8}\left( q^{\prime}(x)-q(x)Q(x)-\int
_{0}^{x}q^{2}(s)ds+\frac{Q^{3}(x)}{6}\right) -\frac{1}{x}\sum_{n=0}^{N}%
\alpha_{n}(x)\right\vert
\end{equation*}
and
\begin{equation*}
\epsilon_{2}(x):=\left\vert \frac{1}{8}\left( q^{\prime}(0)+q(0)Q(x)\right) -%
\frac{1}{x}\sum_{n=0}^{N}\left( -1\right) ^{n}\alpha_{n}(x)\right\vert
\end{equation*}
characterize the accuracy of approximation of the kernel $K_{22}(x,t)$ by a
partial sum of the series \eqref{F-L kernel}.
\end{remark}

\section{Representation of the solution $u\left( \protect\omega,x\right) $}

\begin{theorem}
The solution $u\left( \omega,x\right) $ admits the following representation%
\begin{equation}
u\left( \omega,x\right)  =e^{i\omega x}\left( 1+\frac{Q(x)}{2i\omega }+
\frac{1}{\omega^{2}}\left( \frac{q(x)}{4}-\frac{Q^{2}(x)}{8}\right) \right) -
\frac{q(0)}{4}\frac{e^{-i\omega x}}{\omega^{2}}
 -\frac{1}{\omega^{2}}\sum_{n=0}^{\infty}i^{n}\alpha_{n}(x)j_{n}\left(
\omega x\right) ,  \label{ualpha}
\end{equation}
where the series converges for any $x$ on the segment $\left[ 0,b\right] $
and converges uniformly on any compact subset of the complex plane with
respect to $\omega$. Moreover, the following estimate is valid
\begin{equation}
\left\vert u\left( \omega,x\right) -u_{N}\left( \omega,x\right) \right\vert
\leq\frac{1}{\left\vert \omega\right\vert ^{2}}\varepsilon _{N}(x)\sqrt{%
\frac{\sinh\left( 2\mathop{\rm Im}\omega\,x\right) }{\mathop{\rm Im}\omega}}\qquad\text{for
any }\omega\in\mathbb{C}\backslash \left\{ 0\right\}
\label{estimate general}
\end{equation}
where
\begin{equation*}
u_{N}\left( \omega,x\right)  :=e^{i\omega x}\left( 1+\frac {Q(x)}{2i\omega}+%
\frac{1}{\omega^{2}}\left( \frac{q(x)}{4}-\frac{Q^{2}(x)}{8}\right) \right) -%
\frac{q(0)}{4}\frac{e^{-i\omega x}}{\omega^{2}}  -\frac{1}{\omega^{2}}\sum_{n=0}^{N}i^{n}\alpha_{n}(x)j_{n}\left( \omega
x\right)
\end{equation*}
and $\varepsilon_{N}$ is defined by \eqref{epsilonN}.

In particular, for all $\omega\in\mathbb{R}\backslash\left\{ 0\right\} $ the
estimate has the form $\left\vert u\left( \omega,x\right) -u_{N}\left(
\omega,x\right) \right\vert \leq\frac{1}{\left\vert \omega\right\vert ^{2}}%
\varepsilon_{N}(x)\sqrt{2x}$.
\end{theorem}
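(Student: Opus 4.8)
The plan is to establish the representation \eqref{ualpha} first, and then derive the estimate \eqref{estimate general} from it. For the representation, I would start from the Proposition of Section~3, namely formula \eqref{u}, which expresses $u(\omega,x)$ in terms of the integral $\int_{-x}^{x}K_{22}(x,t)e^{i\omega t}\,dt$. Into this integral I would substitute the Fourier--Legendre expansion \eqref{F-L kernel} of $K_{22}(x,\cdot)$, interchange summation and integration, and use the classical integral formula $\int_{-x}^{x}P_{n}(t/x)e^{i\omega t}\,dt = 2i^{n}x\,j_{n}(\omega x)$ (the same identity underlying the passage from \eqref{KIntro} to \eqref{NSBFIntro} in \cite{KNT}). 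This produces exactly the series $\sum_{n=0}^{\infty}i^{n}\alpha_{n}(x)j_{n}(\omega x)$ with the prefactor $-1/\omega^{2}$, giving \eqref{ualpha}. The justification of the term-by-term integration, and of the claimed uniform convergence on compact subsets of the $\omega$-plane, would follow from the $L_{2}(-x,x)$-convergence in \eqref{epsilonN} combined with a Cauchy--Schwarz bound on the tails of the series against $e^{i\omega t}$, which on a compact $\omega$-set is uniformly bounded in $L_{2}(-x,x)$.

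For the estimate, the key observation is that $u(\omega,x)-u_{N}(\omega,x)$ equals $-\frac{1}{\omega^{2}}$ times the integral of the \emph{tail} of the Fourier--Legendre series against $e^{i\omega t}$; equivalently, by \eqref{u} and the partial-sum version of \eqref{ualpha},
\begin{equation*}
u(\omega,x)-u_{N}(\omega,x) = -\frac{1}{\omega^{2}}\int_{-x}^{x}\bigl(K_{22}(x,t)-K_{22,N}(x,t)\bigr)e^{i\omega t}\,dt .
\end{equation*}
Applying the Cauchy--Schwarz inequality on $(-x,x)$ gives
\begin{equation*}
\bigl|u(\omega,x)-u_{N}(\omega,x)\bigr| \le \frac{1}{|\omega|^{2}}\,\bigl\|K_{22}(x,\cdot)-K_{22,N}(x,\cdot)\bigr\|_{L_{2}(-x,x)}\,\Bigl(\int_{-x}^{x}\bigl|e^{i\omega t}\bigr|^{2}\,dt\Bigr)^{1/2},
\end{equation*}
where the first factor is precisely $\varepsilon_{N}(x)$ from \eqref{epsilonN}. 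It then remains to compute $\int_{-x}^{x}|e^{i\omega t}|^{2}\,dt = \int_{-x}^{x}e^{-2(\operatorname{Im}\omega)t}\,dt = \frac{\sinh(2\operatorname{Im}\omega\,x)}{\operatorname{Im}\omega}$, which is the quantity under the square root in \eqref{estimate general}; note this expression is nonnegative and, interpreted in the limiting sense, equals $2x$ when $\operatorname{Im}\omega=0$, yielding the real-$\omega$ form $\varepsilon_{N}(x)\sqrt{2x}/|\omega|^{2}$.

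I do not expect a serious obstacle here: the argument is essentially a Cauchy--Schwarz estimate combined with one elementary integral. The only points requiring a little care are the justification of interchanging sum and integral in the derivation of \eqref{ualpha} (handled via the $L_{2}$-convergence \eqref{epsilonN}, so that no extra regularity of $q$ beyond $C^{1}$ is needed for the estimate, while the uniform convergence of the series itself is where the stated $C^{1}$ hypothesis on $q$ enters), and the verification of the integral identity for $\int_{-x}^{x}P_n(t/x)e^{i\omega t}\,dt$, which is standard. The estimate \eqref{estimate general} is uniform in the sense that $\varepsilon_{N}(x)$ does not depend on $\omega$, so the $1/|\omega|^{2}$ factor makes the bound improve as $|\omega|\to\infty$ along the real axis, which is the feature \eqref{estimIntrok} promised in the introduction.
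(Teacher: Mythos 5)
Your proposal is correct and follows essentially the same route as the paper: substitute the Fourier--Legendre expansion \eqref{F-L kernel} into the representation \eqref{u}, evaluate the resulting integrals via the Legendre--Bessel integral (Prudnikov 2.17.7), and obtain \eqref{estimate general} from the Cauchy--Schwarz inequality together with the computation $\int_{-x}^{x}\vert e^{i\omega t}\vert^{2}\,dt=\sinh\left(2\operatorname{Im}\omega\,x\right)/\operatorname{Im}\omega$, with the real-$\omega$ case as the limit $2x$. The only (harmless) deviation is in justifying the uniform convergence in $\omega$ on compact sets: the paper invokes Watson's theorem that a Neumann series of Bessel functions has the same convergence domain as its associated power series, whereas you deduce it directly from the $L_{2}$ tail bound and the uniform boundedness of $\Vert e^{i\omega\cdot}\Vert_{L_{2}(-x,x)}$ on compacta, which works equally well.
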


\begin{proof}
The representation (\ref{ualpha}) is obtained by substituting the series (%
\ref{F-L kernel}) into (\ref{u}) and evaluating the arising integrals with
the aid of formula 2.17.7 from \cite[p. 433]{Prudnikov}. To obtain the
estimate (\ref{estimate general}) consider
\begin{align*}
\left\vert u\left( \omega,x\right) -u_{N}\left( \omega,x\right) \right\vert
& =\frac{1}{\left\vert \omega\right\vert ^{2}}\left\vert \int_{-x}^{x}\left(
K_{22}(x,t)-K_{22,N}(x,t)\right) e^{i\omega t}dt\right\vert \\
& \leq\frac{1}{\left\vert \omega\right\vert ^{2}}\varepsilon_{N}(x)\,\left%
\Vert e^{i\omega t}\right\Vert _{L_{2}\left( -x,x\right) }
\end{align*}
where the Cauchy--Bunyakovsky--Schwarz inequality was applied. Calculation
of $\left\Vert e^{i\omega t}\right\Vert _{L_{2}\left( -x,x\right) }$ leads
to (\ref{estimate general}) from which the convergence with respect to $x$
follows.

The convergence of the series with respect to $\omega$ can be established
using the fact that for every $x$ the series represent a Neumann series of
Bessel functions (see, e.g., \cite{Watson} and \cite{Wilkins}) of the entire
function with respect to $\omega$,
\begin{equation*}
e^{i\omega x}\left( \omega^{2}+\frac{\omega Q(x)}{2i}+\frac{1}{2}\left(
\frac{q(x)}{4}-\frac{Q^{2}(x)}{8}\right) \right) -\frac{q(0)}{4}e^{-i\omega
x}-\omega^{2}u\left( \omega,x\right) .
\end{equation*}
Since the radius of convergence of the Neumann series coincides \cite[524--526%
]{Watson} with the radius of convergence of its associated power series
(obtained from the SPPS representation), the series in (\ref{ualpha})
converges uniformly on any compact subset of the complex plane of the
variable $\omega$.
\end{proof}

\section{A relation between the coefficients $\protect\alpha_{n}$ and $%
\protect\beta_{n}$\label{Sect Relation coeff}}

In \cite{KNT} a convenient recurrent integration procedure for computing the
coefficients $\left\{ \beta_{n}\right\} $ of the NSBF representation (\ref%
{uNSBF}) was derived. In order to make use of it for computing the
coefficients $\left\{ \alpha_{n}\right\} $ we establish a relation between
these two systems of functions.

\begin{proposition}
The following equality is valid%
\begin{equation}
\alpha_{n}=\left( 2n-1\right) \left( 2n+1\right) \left( \frac{\beta _{n-2}}{%
x^{2}}+\frac{2\alpha_{n-2}}{\left( 2n-5\right) \left( 2n-1\right) }-\frac{%
\alpha_{n-4}}{\left( 2n-7\right) \left( 2n-5\right) }\right)  \label{alpha_n}
\end{equation}
for $n=4,5,\ldots$.
\end{proposition}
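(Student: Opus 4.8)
The plan is to exploit the key algebraic identity satisfied by spherical Bessel functions, together with the two NSBF representations already established. The crucial relation is the three-term recurrence written in the form
\begin{equation*}
\frac{j_{n}(z)}{z^{2}} = \frac{1}{(2n-1)(2n+1)}\left( j_{n-2}(z) + \frac{2j_{n}(z)}{(2n-5)(2n-1)} - \frac{j_{n-4}(z)}{(2n-7)(2n-5)} \right) \cdot (\text{something}),
\end{equation*}
or more precisely an expansion of $j_{n}(z)/z^{2}$ as a finite linear combination of $j_{n-4}, j_{n-2}, j_{n}$; this follows by iterating the standard recurrence $j_{n-1}(z)+j_{n+1}(z) = \frac{2n+1}{z}j_{n}(z)$ twice. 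First I would derive this identity cleanly, getting explicit coefficients $(2n-1)(2n+1)$ in front and the internal weights $1$, $\frac{2}{(2n-5)(2n-1)}$, $-\frac{1}{(2n-7)(2n-5)}$ that appear in \eqref{alpha_n}.

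Next I would take the NSBF representation \eqref{uNSBF} of $u(\omega,x)$ and formally compute $\omega^{2}u(\omega,x)$, or rather the entire function
\begin{equation*}
g(\omega,x) := e^{i\omega x}\left( \omega^{2}+\frac{\omega Q(x)}{2i}+\frac{1}{2}\left( \frac{q(x)}{4}-\frac{Q^{2}(x)}{8}\right) \right) -\frac{q(0)}{4}e^{-i\omega x}-\omega^{2}u(\omega,x)
\end{equation*}
that appears in the proof of the preceding theorem, for which $-\frac{1}{\omega^{2}}g(\omega,x) = \sum i^{n}\alpha_{n}(x)j_{n}(\omega x)/\omega^{2}$ — i.e., $g(\omega,x) = -\sum_{n}i^{n}\alpha_{n}(x)j_{n}(\omega x)$ is the NSBF expansion with coefficients $-i^{n}\alpha_{n}$. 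On the other hand, using \eqref{uNSBF}, $\omega^{2}u(\omega,x)$ contributes $\omega^{2}\sum_{n} i^{n}\beta_{n}(x) j_{n}(\omega x)$; I rewrite $\omega^{2} j_{n}(\omega x) = x^{-2}(\omega x)^{2} j_{n}(\omega x)$ and apply the recurrence identity to express $(\omega x)^{2}j_{n}(\omega x)$ in terms of $j_{n-4},j_{n-2},j_{n}$. This turns $\omega^{2}\sum i^{n}\beta_{n}j_{n}$ into a new NSBF whose $n$-th coefficient collects contributions from the $\beta$-terms with indices $n-4, n-2, n$ (after shifting summation indices), plus there is the already-known NSBF $-\sum i^{n}\alpha_{n}j_{n}$ coming from the other part of $g$ through the defining relation.

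Then I would equate coefficients of $i^{n}j_{n}(\omega x)$ on both sides. Coefficients of a Neumann series of Bessel functions are unique (as used implicitly in the excerpt, e.g. in passing from \eqref{F-L kernel} to the formula for $\alpha_n$, and guaranteed by the theory in \cite{Watson}), so this is legitimate. The resulting linear relation among $\alpha_{n}$, $\alpha_{n-2}$, $\alpha_{n-4}$ and $\beta_{n-2}$ is, after collecting the powers of $i$ (note $i^{n-2} = -i^{n}$, $i^{n-4}=i^{n}$, which produces the signs in \eqref{alpha_n}) and the factor $x^{-2}$, exactly \eqref{alpha_n}, valid for $n\geq 4$ since only then do all four indices $n-4,\dots,n$ occur and the spherical Bessel recurrence is applied without running into $j_{-1}$ issues at this level.

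The main obstacle I anticipate is purely bookkeeping: getting the double application of the Bessel recurrence right, in particular tracking the denominators $(2n-5)(2n-1)$ and $(2n-7)(2n-5)$ and the index shifts so that everything lands on $j_{n}$, and making sure the contribution of the $\omega^{2}, \omega Q/2i$ and constant prefactors of $e^{\pm i\omega x}$ in $g$ is correctly absorbed into the $\alpha$-side rather than double-counted. One clean way to sidestep some of this is to work directly from the remark relating $K_{22}$, $K$ and their Fourier–Legendre coefficients, i.e. translate the Bessel recurrence into the corresponding three-term relation for Legendre coefficients of $K_{22}$ versus $K$ via the identity connecting $P_{n}(t/x)$ expansions under multiplication by $t^{2}$; but the Bessel-recurrence route above is the most transparent. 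A consistency check at the end is to verify \eqref{alpha_n} against the explicit low-order formulas: for $n=4$ it should reproduce the value of $\alpha_{4}$ one would get from $\mathbf{k}_{m}$ and the known $\beta_{2}$, which I would note as confirmation.
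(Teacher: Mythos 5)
Your overall strategy --- equate the two representations \eqref{uNSBF} and \eqref{ualpha} and match coefficients of $j_m(\omega x)$ after a three-term spherical-Bessel identity --- is the same as the paper's, but the pivotal step as you describe it cannot be carried out. You propose to push the factor $\omega^{2}$ onto the $\beta$-series and expand $(\omega x)^{2}j_{n}(\omega x)$ as a finite linear combination of $j_{n-4},j_{n-2},j_{n}$. No such finite expansion exists: the recurrence $j_{n-1}(z)+j_{n+1}(z)=\frac{2n+1}{z}j_{n}(z)$ only lets you \emph{divide} by $z$, and iterating it twice gives
\begin{equation*}
\frac{j_{n}(z)}{z^{2}}=\frac{j_{n-2}(z)}{(2n-1)(2n+1)}+\frac{2j_{n}(z)}{(2n-1)(2n+3)}+\frac{j_{n+2}(z)}{(2n+1)(2n+3)},
\end{equation*}
a symmetric shift by $\pm2$, not the one-sided list of orders you wrote; multiplication by $z^{2}$ takes you outside the span of the $j_{k}$ (for instance $z^{2}j_{0}(z)=z\sin z$ is not a finite combination of spherical Bessel functions). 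The correct move is the opposite one, which is what the paper does: leave the $\beta$-series untouched and apply the displayed identity to $\frac{1}{\omega^{2}}\sum i^{n}\alpha_{n}(x)j_{n}(\omega x)=x^{2}\sum i^{n}\alpha_{n}(x)\,j_{n}(\omega x)/(\omega x)^{2}$. After that, your sign bookkeeping via $i^{n\pm2}=-i^{n}$ is right and yields $\beta_{m}=x^{2}\bigl(\frac{\alpha_{m+2}}{(2m+3)(2m+5)}-\frac{2\alpha_{m}}{(2m-1)(2m+3)}+\frac{\alpha_{m-2}}{(2m-3)(2m-1)}\bigr)$, which upon setting $m=n-2$ and solving for $\alpha_{n}$ is exactly \eqref{alpha_n}.

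A second, smaller gap is your appeal to uniqueness of Neumann-series coefficients to justify the matching. Once the prefactor terms $e^{\pm i\omega x}/\omega$ and $e^{\pm i\omega x}/\omega^{2}$ are expanded (using $\frac{e^{iz}}{z}=j_{-1}(z)+ij_{0}(z)$ and $\frac{e^{\pm iz}}{z^{2}}=-j_{-2}(z)\pm ij_{-1}(z)-j_{0}(z)\pm ij_{1}(z)$), the identity contains spherical Bessel functions of negative order, so it is not a Neumann series in the standard sense and you cannot simply read off coefficients. The paper isolates the coefficient of $j_{m}$ by multiplying by $j_{m}(\omega x)$ with $m\geq2$ and integrating over $\omega\in\mathbb{R}$, using the orthogonality $\int_{-\infty}^{\infty}j_{m}j_{n}=0$ for $m\neq n$, $m+n\geq0$; the restriction $m\geq2$ is exactly what annihilates all the negative-order contributions, and it is the reason \eqref{alpha_n} is asserted only for $n\geq4$. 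Your plan to check the result against an explicitly computed $\alpha_{4}$ is sound and would have exposed the direction error in the Bessel identity.
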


\begin{proof}
Equating the expressions (\ref{uNSBF}) and (\ref{ualpha}) we obtain the
equality%
\begin{equation*}
e^{i\omega x}\left( \frac{Q(x)}{2i\omega}+\frac{1}{\omega^{2}}\left( \frac{%
q(x)}{4}-\frac{Q^{2}(x)}{8}\right) \right) -\frac{q(0)}{4}\frac{e^{-i\omega
x}}{\omega^{2}}-\frac{1}{\omega^{2}}\sum_{n=0}^{\infty}i^{n}%
\alpha_{n}(x)j_{n}\left( \omega x\right)
=\sum_{n=0}^{\infty}i^{n}\beta_{n}(x)j_{n}\left( \omega x\right) .
\end{equation*}
On the left-hand side the following identity can be used
\begin{equation*}
\frac{j_{n}(z)}{z^{2}}=\frac{j_{n-2}(z)}{\left( 2n-1\right) \left(
2n+1\right) }+\frac{2j_{n}(z)}{\left( 2n-1\right) \left( 2n+3\right) }+\frac{%
j_{n+2}(z)}{\left( 2n+1\right) \left( 2n+3\right) }.
\end{equation*}
Thus,
\begin{multline}
 e^{i\omega x}\left( \frac{Q(x)}{2i\omega}+\frac{1}{\omega^{2}}\left( \frac{%
q(x)}{4}-\frac{Q^{2}(x)}{8}\right) \right) -\frac{q(0)}{4}\frac{e^{-i\omega
x}}{\omega^{2}}   \\
 -x^{2}\sum_{n=0}^{\infty}i^{n}\alpha_{n}(x)\left( \frac{j_{n-2}(\omega x)}{%
\left( 2n-1\right) \left( 2n+1\right) }+\frac{2j_{n}(\omega x)}{\left(
2n-1\right) \left( 2n+3\right) }+\frac{j_{n+2}(\omega x)}{\left( 2n+1\right)
\left( 2n+3\right) }\right)   \\
 =\sum_{n=0}^{\infty}i^{n}\beta_{n}(x)j_{n}\left( \omega x\right) .
\label{eqty}
\end{multline}
Recall the orthogonality property of the spherical Bessel functions (see,
e.g., \cite[p. 732]{Arfken})%
\begin{equation}
\int_{-\infty}^{\infty}j_{m}(z)\,j_{n}(z)\,dz=
\begin{cases}
0, & m\neq n,\ m+n\geq0, \\
\frac{\pi}{2n+1}, & m=n.
\end{cases}\label{jmn}
\end{equation}
Notice that $\frac{e^{iz}}{z}=j_{-1}(z)+ij_{0}(z)$ and $\frac{e^{\pm iz}}{z^{2}}=-j_{-2}(z)\pm ij_{-1}(z)-j_{0}(z)\pm ij_{1}(z)$ and hence (\ref{eqty}) can be written in the form
\begin{multline*}
 \frac{xQ(x)}{2i}\bigl( j_{-1}(\omega x)+ij_{0}(\omega x)\bigr)
-x^{2}\left( \frac{q(x)}{4}-\frac{Q^{2}(x)}{8}\right) \bigl( j_{-2}(\omega
x)-ij_{-1}(\omega x)+j_{0}(\omega x)-ij_{1}(\omega x)\bigr) \\
 +\frac{q(0)x^{2}}{4}\bigl( j_{-2}(\omega x)+ij_{-1}(\omega x)+j_{0}(\omega
x)+ij_{1}(\omega x)\bigr) \\
 -x^{2}\sum_{n=0}^{\infty}i^{n}\alpha_{n}(x)\left( \frac{j_{n-2}(\omega x)}{%
\left( 2n-1\right) \left( 2n+1\right) }+\frac{2j_{n}(\omega x)}{\left(
2n-1\right) \left( 2n+3\right) }+\frac{j_{n+2}(\omega x)}{\left( 2n+1\right)
\left( 2n+3\right) }\right) \\
 =\sum_{n=0}^{\infty}i^{n}\beta_{n}(x)j_{n}\left( \omega x\right) .
\end{multline*}
Multiplication of this equality by $j_{m}(\omega x)$, $m\geq2$ and
integration with respect to $\omega$, with the aid of (\ref{jmn}), leads to the equality
\begin{equation*}
\beta_{m}(x)=x^{2}\left( \frac{\alpha_{m+2}(x)}{(2m+3)(2m+5)}-\frac {%
2\alpha_{m}(x)}{(2m-1)(2m+3)}+\frac{\alpha_{m-2}(x)}{(2m-3)(2m-1)}\right)
\end{equation*}
which gives us (\ref{alpha_n}).
\end{proof}

\section{Numerical illustration}

A numerical approach based on the NSBF representation (\ref{uNSBF}) for
solving equation (\ref{Schrod}) for a wide range of values of the spectral
parameter $\omega$ as well as related spectral problems was presented in
\cite{KNT}. It is an efficient, simple and practical method which apart from
its numerical advantages is available for an unaided programming by
researchers which does not require any sophisticated numerical technique
usually encoded in purely numerical solvers. The aim of this section is to
show that another NSBF representation (\ref{ualpha}) derived in the present
work can give even more accurate results when the number of the computed
terms is limited.

\begin{example}
\label{Example Exp} Consider the following spectral problem (the first Paine
problem, \cite{Paine}, see also \cite[Example 7.2]{KNT})
\begin{equation*}
\begin{cases}
-u^{\prime\prime}+e^{x}u=\lambda u,\qquad 0\leq x\leq\pi, \\
u(0,\lambda)=u(\pi,\lambda)=0.%
\end{cases}%
\end{equation*}

We solve this problem numerically using partial sums from \eqref{uNSBF} and \eqref{ualpha} and referring the reader to \cite{KNT} where relevant details
on numerical aspects can be found. Thus, an approximate solution of the
Sturm-Liouville problem is computed using the approximate solutions
\begin{equation}
\widetilde{u}_{N}\left( \pm \omega ,x\right) =e^{\pm i\omega
x}+\sum_{n=0}^{N}i^{n}\beta _{n}(x)j_{n}\left( \pm \omega x\right)
\label{u_N1}
\end{equation}%
and
\begin{equation}
u_{N}\left( \pm \omega ,x\right)  :=e^{\pm i\omega x}\left( 1\pm \frac{Q(x)%
}{2i\omega }+\frac{1}{\omega ^{2}}\left( \frac{q(x)}{4}-\frac{Q^{2}(x)}{8}%
\right) \right) -\frac{q(0)}{4}\frac{e^{\mp i\omega x}}{\omega ^{2}}   -\frac{1}{\omega ^{2}}\sum_{n=0}^{N+2}i^{n}\alpha _{n}(x)j_{n}\left( \pm
\omega x\right) .  \label{uN}
\end{equation}
The upper bound of summation $N+2$ in \eqref{uN} in comparison with $N$
in \eqref{u_N1} is due to the fact that $N$ computed coefficients $\beta _{n}$
are transformed into $N+2$ computed coefficients $\alpha _{n}$ (see \eqref{alpha_n}).

\begin{figure}[htb!]
\centering
\begin{tabular}{c}
\includegraphics[bb=117 330 477 510, width=5in,height=2.5in]{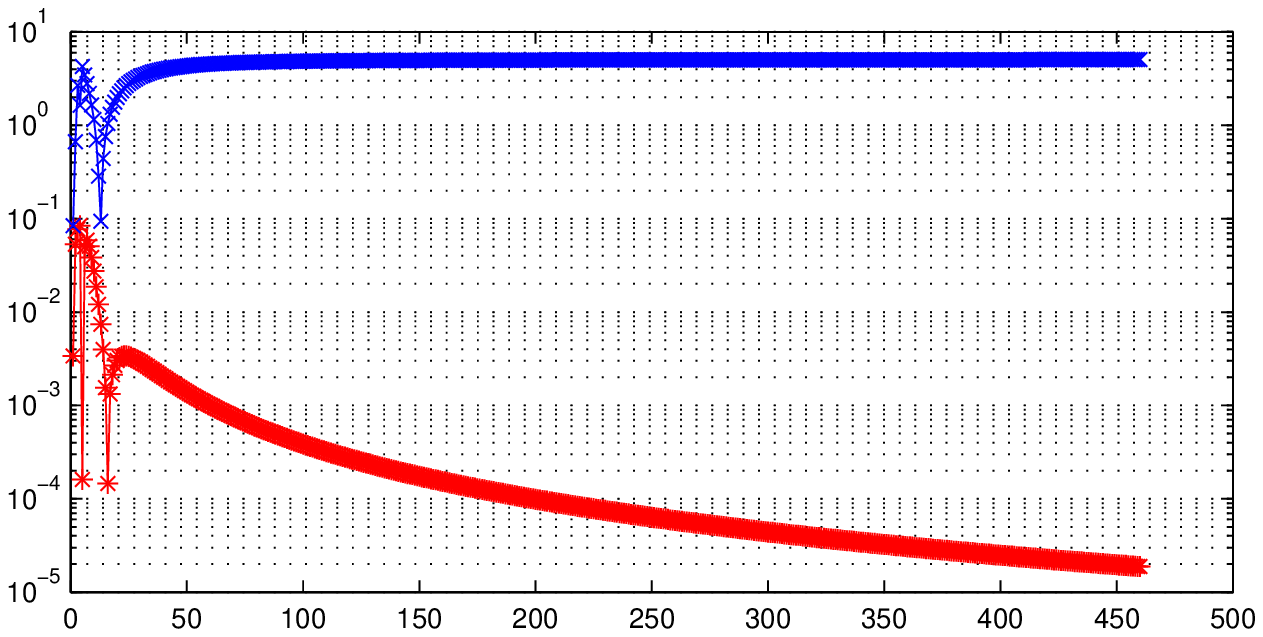}\\
(a) $N=5$\\
\includegraphics[bb=117 330 477 510, width=5in,height=2.5in]{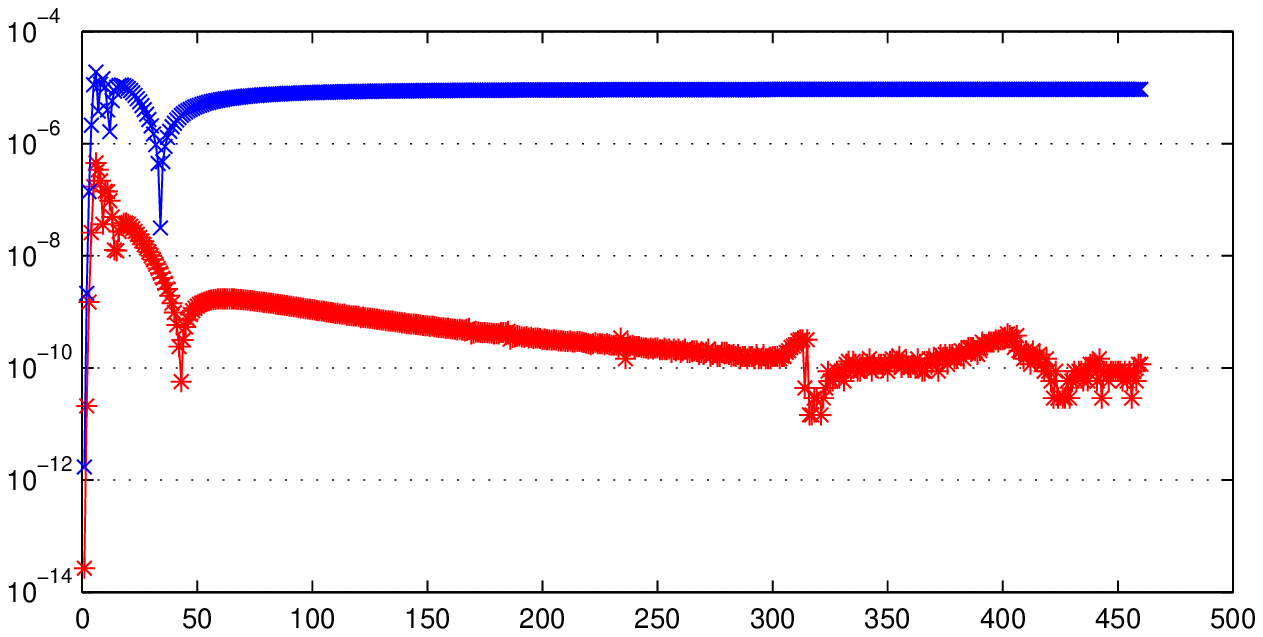}\\
(b) $N=15$\\
\includegraphics[bb=117 330 477 510, width=5in,height=2.5in]{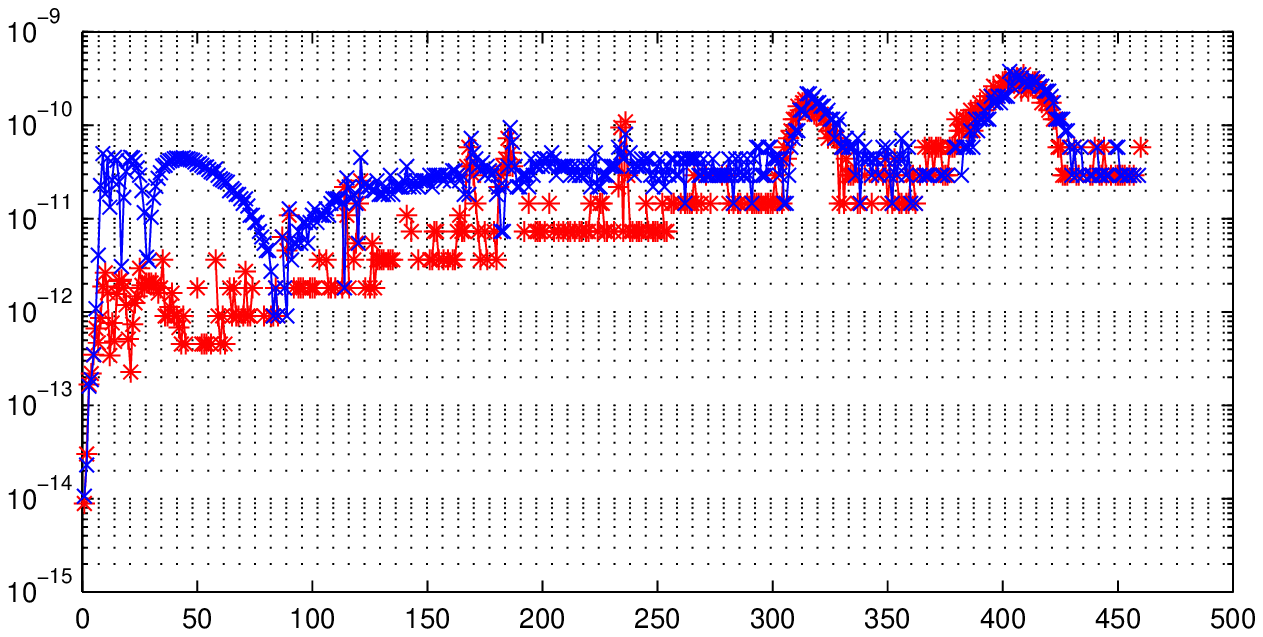}\\
(c) $N=25$
\end{tabular}
\caption{Comparison of the absolute
errors of approximation of the first 460 eigenvalues computed using partial
sums (\protect\ref{u_N1}) (in blue -$\times$- lines) and (\protect\ref{uN})
(in red -*- lines). (a) with $N=5$, (b) with $N=15$ and (c) with $N=25$.}
\label{Fig1}
\end{figure}

For relatively small $N$ the eigenvalues are approximated more accurately by
the solution obtained from \eqref{uN}. This can be appreciated on Figs.\,\ref{Fig1} (a) and (b) where the absolute error of the first 460 eigenvalues computed is
presented for $N=5$ and $N=15$ respectively. However for larger $N$ the
difference in the accuracy practically disappears, as is illustrated by Fig.\,\ref{Fig1} (c) where the same comparison is made for $N=25$. Both approximations deliver
excellent numerical results in fractions of a second on a usual computer.

We emphasize that the eigenvalues of the problem behave asymptotically \cite{Levitan1950} as $\sim\left( n+\frac{Q(\pi )}{2\pi n}\right) ^{2}$, however the values computed by both our algorithms are still closer to the exact one.
For example, the exact value of $\lambda_{460}$ is $211607.047634847$ (rounded up to the presented digits), the asymptotic expression gives $211607.047660$, while the approximation based on \eqref{uN}
delivered the value $211607.047634847$. Moreover, due to the \textquotedblleft
largeness\textquotedblright\ of the higher eigenvalues the errors of approximate values computed using either of NSBF
representations are limited by machine precision (while absolute errors are of order $10^{-9}$, relative errors are less than $10^{-15}$).
\end{example}

\end{document}